\documentclass[12pt,leqno]{amsart}
\usepackage{amssymb}
\usepackage{amsmath}
\usepackage{amsthm}
\usepackage{amsfonts}
\usepackage{mathrsfs} %for mathscr fonts
\usepackage{float}
\usepackage{graphicx}

%%%%%%%%%%%% METHOD FOR HOUR AND MINUTE %%%%%%%%%%%%%
\newcounter{minutes}\setcounter{minutes}{\time}
\divide\time by 60
\newcounter{hours}\setcounter{hours}{\time}
\multiply\time by 60
\addtocounter{minutes}{-\time}
%%%%%%%%%%%%%%%%%%%%%%%%%%%%%%%%%%%%%%%%%%%%%%%%%%%%%
%\usepackage[T1]{fontenc} %skandit
\usepackage{enumerate}
\usepackage[T1]{fontenc} %skandit

\addtolength{\oddsidemargin}{-2cm}
\addtolength{\evensidemargin}{-2cm}
\textheight=222mm
\textwidth=144mm

\parskip=6pt

\theoremstyle{plain}
\newtheorem{theorem}{Theorem}[section]
\newtheorem{lemma}[theorem]{Lemma}
\newtheorem{corollary}[theorem]{Corollary}

\theoremstyle{definition}

\DeclareMathOperator{\esssup}{ess\,sup}
\DeclareMathOperator{\essinf}{ess\,inf}
\DeclareMathOperator{\Real}{Re \,}
\DeclareMathOperator{\Imaginary}{Im \,}
\DeclareMathOperator{\Arg}{Arg \,}

\numberwithin{equation}{section}

\begin{document}

\title[Circular Symmetrization and Arclength Problems]{%
Circular Symmetrization, Subordination and Arclength problems
on Convex Functions}
\date{}

\author{M. Okada}
\address{M. Okada,
Department of Applied Science,
Faculty of Engineering,
Yamaguchi University,
Tokiwadai, Ube 755-8611, Japan.}
\email{okada@yamaguchi-u.ac.jp}

\author{S. Ponnusamy}
\address{S. Ponnusamy,
Indian Statistical Institute (ISI), Chennai Centre,
SETS (Society for Electronic Transactions and security),
MGR Knowledge City, CIT Campus, Taramani,
Chennai 600 113, India.}
\email{samy@isichennai.res.in, samy@iitm.ac.in}

\author{A. Vasudevarao}
\address{A. Vasudevarao,
Department of Mathematics,
Indian Institute of Technology Kharagpur,
Kharagpur-721 302, West Bengal, India.}
\email{alluvasu@maths.iitkgp.ernet.in}

\author{H. Yanagihara}
\address{H. Yanagihara,
Department of Applied Science,
Faculty of Engineering,
Yamaguchi University,
Tokiwadai, Ube 755-8611, Japan.}
\email{hiroshi@yamaguchi-u.ac.jp}

\keywords{Univalent, close-to-convex, starlike and convex functions;
integral means, arclength, symmetrization, subordination}
\subjclass[2010]{30C45}

\thanks{
${}^\dagger$ {\tt This authors is on leave from   
Indian Institute of Technology Madras,  India.}
}

\def\thefootnote{}
\footnotetext{ {\tiny File:~\jobname.tex,
printed: \number\year-\number\month-\number\day,
          \thehours.\ifnum\theminutes<10{0}\fi\theminutes }
} \makeatletter\def\thefootnote{\@arabic\c@footnote}\makeatother

\begin{abstract}
We study the class ${\mathcal C}(\Omega )$
of univalent analytic functions
$f$
in the unit disk
$\mathbb{D} = \{ z \in \mathbb{C} :\,|z|<1 \}$ of the form
$f(z)=z+\sum_{n=2}^{\infty}a_n z^n$
satisfying
\[
  1+\frac{zf''(z)}{f'(z)} \in \Omega ,
 \quad z\in \mathbb{D},
\]
where $\Omega$ will be a proper
subdomain of ${\mathbb C}$ which is starlike
with respect to $1 ( \in \Omega)$.
Let $\phi_\Omega$ be the unique conformal mapping
of ${\mathbb D}$ onto $\Omega$ with $\phi_\Omega (0)=1$ and $\phi_\Omega '(0) > 0$
and $ k_\Omega (z) = \int_0^z \exp \left( \int_0^t
\zeta^{-1} (\phi_\Omega ( \zeta) -1 ) \, d \zeta \right) \, dt$.
Let $L_r(f)$ denote the arclength of the image
of the circle $\{z \in \mathbb{C} : \, |z|=r\}$, $r\in (0,1)$.
The first result in this paper
is an inequality $L_r(f) \leq L_r(k_\Omega )$ for
$f \in \mathcal{C} ( \Omega )$, which
solves the general extremal problem
$\max_{f \in {\mathcal C}(\Omega)} L_r(f)$,
and contains many other well-known results
of the previous authors as
special cases.
Other results of this article cover
another set of related problems
about integral means in the general setting
of the class ${\mathcal C}(\Omega )$.
\end{abstract}
\thanks{}
\dedicatory{}

\maketitle

\section{Introduction}
Let ${\mathbb C}$ be the complex plane
and ${\mathbb D}(c,r) = \{ z \in {\mathbb C} :\, |z-c| < r \}$
with $c \in {\mathbb C}$ and $r > 0$.
In particular we  denote the unit disk ${\mathbb D}(0,1)$ by ${\mathbb D}$.
Let ${\mathcal A} $
be the linear space of all analytic functions
in the unit disk ${\mathbb D}$,
endowed with the topology of
uniform convergence on every compact subset of
${\mathbb D}$.
Set $ \mathcal{A}_0 = \{ f \in {\mathcal H} :\, f(0)=f'(0)-1 = 0\}$
and denote by
${\mathcal S}$ the subclass of $\mathcal{A}_0$
consisting of all univalent functions as usual.
Then ${\mathcal S}$
is a compact subset of the metrizable space $\mathcal{A}$.
See \cite[Chap. 9]{DurenBook} for details.
For $f \in \mathcal{A}$ and $0<r<1$, let
\[
    L_r(f)
    =
    \int_{-\pi}^\pi
    r |f'(re^{i \theta})| \, d \theta
\]
denote the arclength
of the image of the circle
$\partial {\mathbb D}(0,r)= \{z \in \mathbb{C} : \, |z|=r\}$.
Many extremal problems in the class
${\mathcal S}$ have been solved by the Koebe function
\[
   k(z) = \frac{z}{(1-z)^2}
\]
or by its rotation: $k_\theta  (z) = e^{-i\theta } k(e^{i\theta } z)$,
where $\theta$ is real.
Note that
 $k_\theta $ maps the unit disk $\mathbb{D}$
onto the complement of a ray.
In any case, since the functional ${\mathcal A} \ni f \mapsto L_r(f)$
is continuous and the class
${\mathcal S}$ is compact,
a solution of the extremal problem
\begin{equation}
\label{eq-ext1}
      \max_{f \in {\mathcal S} } L_r(f)
\end{equation}
exists and is in ${\mathcal S}$.
We remark that with a clever use of Dirichlet-finite integral
and the isoperimetric inequality,
Yamashita \cite{Yama-1990} obtained
the  following upper and lower estimates for the functional \eqref{eq-ext1}:
\[
   m(r)
   \leq
   L_r(k)
   \leq
   \max_{f \in {\mathcal S} } L_r(f)
   \leq
   \frac{2\pi r}{(1-r)^2} ,
\]
where
\[
 m(r)=  \frac{2\pi r\sqrt{r^4+4r^2+1}}{(1-r^2)^2}
 \geq
 \frac{2\pi r(1+r)}{(1-r)^2} \frac{\sqrt{6}}{8}
 >
 \frac{\pi r(1+r)}{2(1-r)^2}.
\]
This observation provides an improvement
over
the earlier result of Duren \cite[Theorem 2]{Duren}
and \cite[p.~39]{Du}, and moreover,
\[
  m(r)\geq  \frac{\sqrt{6}}{2}\frac{\pi r}{(1-r)^2}.
\]
The extremal problem
\eqref{eq-ext1}
stimulated much research in the theory of univalent functions,
and the problem of determining of the maximum
value and the extremal functions in ${\mathcal S}$ remains open.
However, the extremal problem
\begin{equation}
\label{eq-ext2}
   \max_{f \in {\mathcal F}} L_r(f)
\end{equation}
has been solved for
a number of subclasses $\mathcal F$ of ${\mathcal S}$.
In order to motivate these known results
and also for our further discussion on this topic,
we need to introduce some notations.

Unless otherwise stated explicitly,
throughout the discussion $\Omega$ will be a simply connected domain
in ${\mathbb C}$ with $1 \in \Omega \not= {\mathbb C}$
and  $\phi _\Omega$ is the unique conformal mapping
of ${\mathbb D}$ onto $\Omega$
with $\phi_\Omega (0)=1$ and $\phi_\Omega '(0) > 0$.

Ma and Minda \cite{Ma-Minda} considered
the classes ${\mathcal S}^*(\Omega )$ and $\mathcal{C}(\Omega )$
with some mild conditions,
eg. $\Omega $ is starlike with respect to $1$ and
the symmetry with respect to the real axis $\mathbb{R}$, i.e.,
$\overline{\Omega} = \Omega$:
\[
  {\mathcal S}^* (\Omega)
  =
  \left\{
  f \in \mathcal{A}_0 :\,
  \frac{zf'(z)}{f(z)} \in \Omega
  \text{ on $\mathbb{D}$}
  \right\} ,
  \]
and
\[
  \mathcal{C}(\Omega)
  =
  \left\{ f \in \mathcal{A}_0 : \,
  1+\frac{zf''(z)}{f'(z)} \in \Omega  \text{ on $\mathbb{D}$}  \right\} .
\]
Note that, with the special choice of
$\Omega = \mathbb{H} = : \{ w \in \mathbb{C} : \Real w > 0 \}$,
these two classes consist of starlike and convex functions in the standard sense,
and are denoted simply by ${\mathcal S}^*$ and $\mathcal{C}$,  respectively.

If $0< \alpha \leq 1$ and
$\Omega  = \{ w \in \mathbb{C} : | \Arg w | < 2^{-1} \pi \alpha \}$,
then  $\phi_\Omega (z) = \{ (1+z)/(1-z) \}^\alpha $,
and hence, in this choice ${\mathcal C} (\Omega ) $
reduces to the class of strongly convex functions
of order $\alpha$. 

Furthermore,  for $-1/2 \leq \beta < 1$
and $\Omega = \{w\in \mathbb{C}:\, \text{Re} \,  w > \beta  \}$
and $\phi _\Omega (z) = (1+(1-2\beta)z)/(1-z)$,
the class ${\mathcal C} ( \{w\in \mathbb{C}:\, \text{Re} \,  w > \beta  \} ) $
coincides with the class of convex functions of order $\beta$.
Various subclasses of ${\mathcal C }$
can be expressed in this way.
For details we refer to  \cite{Ma-Minda}
and \cite{Yanagihara}.
We  notice that it may be possible that
$\mathbb{H} \subset \Omega $,
and in this case we have
${\mathcal C} \subset {\mathcal C} ( \Omega ) $ whenever  $0\leq \beta < 1$. When $-1/2 \leq \beta < 0$,
functions in ${\mathcal C} ( \Omega )$ are known to be convex in some direction (see
\cite{Umezawa}).

A function $f$ in $\mathcal{A}_0$ is said
to be close-to-convex if there exists
a convex function $g$
and a real number $\beta\in (-\pi/2,\pi/2)$ such that
\[
  e^{i \beta }
  \frac{ f'(z)}{g'(z)}
  \in \mathbb{H}
\]
on ${\mathbb D}$.
We denote the
class of close-to-convex functions in $\mathbb{D}$ by $\mathcal{K}$
which has been introduced by Kaplan  \cite{Kaplan52}.
These standard geometric classes are related by the proper inclusions
\[
   \mathcal{C}
   \subsetneq
   {\mathcal S}^*
   \subsetneq
   \mathcal{K}
   \subsetneq
   {\mathcal S} .
\]
The extremal problem  \eqref{eq-ext2}
for ${\mathcal F}= {\mathcal C}$ has been solved
by Keogh \cite{Keogh} who showed that
\[
  \max_{f \in {\mathcal C}} L_r(f)
  =
  \frac{2\pi r}{1-r^2} = L_r( \ell_\theta )
\]
with equality if and only if
$f= \ell_\theta$.
Here   $\ell_\theta (z)=z/(1- e^{i\theta }z)$, where $\theta$ is real.
The extremal problem  \eqref{eq-ext2}
for the cases ${\mathcal F}= {\mathcal S}^*$
and  ${\mathcal F}= {\mathcal K}$
were solved by Marx \cite{marx32}
and Clunie and Duren \cite{Clunie-Duren}, respectively.
In both cases, the Koebe function and its rotations
solve the corresponding extremal problem.
That is,
for ${\mathcal F}= {\mathcal S}^*$
and ${\mathcal F}= {\mathcal K}$,
one has
$\max_{f \in {\mathcal F}} L_r(f) = L_r(k)$
with equality if and only if $f(z)=k_\theta  (z) $,
where $\theta$ is real.
As a  straightforward adaptations of the known proofs,
Miller  \cite{miller72}
extended all these three cases to
the corresponding subclasses of ${\mathcal S}$ consisting of
$m$-fold convex, starlike and close-to-convex functions, respectively.
Finally, by making use of the theory of symmetrization developed by
Baernstein \cite{Baernstein},
Leung \cite{Leung} extended the result
for ${\mathcal F}= {\mathcal S}^*$
to the class of Bazilevi\v{c} functions
and the generalized functional
$\int_{-\pi}^\pi \Phi ( \log |f'(re^{i \theta })|)\, d \theta $,
where $\Phi$ is a nondecreasing convex function on ${\mathbb R}$.
Recently, the extremal problem  \eqref{eq-ext2} for the class  of convex functions
of order $-1/2$ was solved in \cite{AbuLiPo}.
%Here $ \mathcal{G}$ denotes the class analytic functions
%$f\in\mathcal A$ such that
%$${\rm Re } \left( 1+\frac{zf''(z)}{f'(z)}\right) > -\frac{1}{2}  ~\mbox{ for $z\in \ID$}.
%$$

One of the aims of the present article is
to study similar extremal problems
for various subclasses
$\mathcal{C} ( \Omega )$
in a unified manner.
Let
\begin{equation}
\label{eq-ext3}
   k_\Omega(z)
   =
   \int_0^z
   \exp \left(
   \int_0^{t} \frac{\phi_\Omega ( \zeta )-1}{\zeta} \, d \zeta
    \right) d t ,
   \quad z \in {\mathbb D} .
\end{equation}
Then $k_\Omega \in {\mathcal C} (  \Omega )$.
When $\Omega$ is starlike with respect to $1$,
the extremal problem
$\max_{f \in {\mathcal C}( \Omega)} L_r(f) $
can be solved
and $k_\Omega$ plays
the role of the extremal function.

\begin{theorem}
\label{thm-01}
If $\Omega$ is starlike with respect to $1$, then, for
$f \in {\mathcal C}( \Omega )$, we have
\begin{equation}
L_r(f) \leq L_r( k_\Omega)
\label{ineq:arclenght}
\end{equation}
with equality if and only if
$f(z) =  \overline{\varepsilon} k_\Omega( \varepsilon z)$
for some $\varepsilon \in \partial \mathbb{D}$.
\label{thm:arclength}
\end{theorem}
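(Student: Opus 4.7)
The plan is to reduce \eqref{ineq:arclenght} to a Littlewood-type subordination for the derivatives $f'$ and $k_\Omega'$. Since $f \in \mathcal{C}(\Omega)$ and $\phi_\Omega$ is the conformal map of $\mathbb{D}$ onto $\Omega$ with $\phi_\Omega(0)=1$, the subordination principle supplies a Schwarz function $\omega$ (analytic on $\mathbb{D}$ with $\omega(0)=0$ and $|\omega|<1$) such that
\[
1 + \frac{zf''(z)}{f'(z)} = \phi_\Omega(\omega(z)), \qquad z \in \mathbb{D}.
\]
Because $f'(0)=1$ and $f'$ has no zero in $\mathbb{D}$ (otherwise the left-hand side would not be holomorphic), integrating the logarithmic derivative yields
\[
\log f'(z) = \int_0^z \frac{\phi_\Omega(\omega(t)) - 1}{t}\, dt, \qquad \log k_\Omega'(z) = \int_0^z \frac{\phi_\Omega(t) - 1}{t}\, dt.
\]

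The decisive step is to promote the trivial subordination $\phi_\Omega \circ \omega - 1 \prec \phi_\Omega - 1$, witnessed by $\omega$ itself, to $\log f' \prec \log k_\Omega'$. The hypothesis that $\Omega$ is starlike with respect to $1$ is equivalent to $\phi_\Omega - 1$ being a univalent starlike function in the classical sense (since $(\phi_\Omega-1)(\mathbb{D})=\Omega-1$ is starlike with respect to $0$); by Alexander's theorem, the logarithmic integral $\log k_\Omega'$ is then convex univalent. A Hallenbeck--Ruscheweyh-type preservation result for the integral operator $P \mapsto \int_0^z P(t)/t\,dt$, valid when the resulting dominant is convex, yields $\log f' \prec \log k_\Omega'$; exponentiation then produces a Schwarz function $\widetilde\omega$ with $f'(z) = k_\Omega'(\widetilde\omega(z))$, i.e., $f' \prec k_\Omega'$. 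Littlewood's subordination theorem, with exponent $p = 1$, gives
\[
\int_{-\pi}^\pi |f'(re^{i\theta})|\, d\theta \le \int_{-\pi}^\pi |k_\Omega'(re^{i\theta})|\, d\theta,
\]
which is \eqref{ineq:arclenght} after multiplying by $r$. Equality at some $r \in (0,1)$ forces $\widetilde\omega(z) = \varepsilon z$ with $\varepsilon \in \partial\mathbb{D}$; integrating $f'(z) = k_\Omega'(\varepsilon z)$ with $f(0)=0$ then yields $f(z) = \overline{\varepsilon}\, k_\Omega(\varepsilon z)$.

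The main obstacle is the subordination-preservation step $\log f' \prec \log k_\Omega'$. The cleanest Hallenbeck--Ruscheweyh statements require a convex dominant inside the integral, whereas the natural dominant $\phi_\Omega - 1$ here is only starlike, so the implication must be established either by treating the convex function $\log k_\Omega'$ as the effective dominant through a differential subordination of Miller--Mocanu type, or, more robustly, by working at the level of integral means with Baernstein's $\ast$-function and circular symmetrization, in the spirit of Leung's treatment of Bazilevi\v{c} functions and in keeping with the paper's title. The latter route would compare $(\log|f'|)^\ast$ with $(\log|k_\Omega'|)^\ast$ directly and then apply the convex weight $\Phi(x)=e^x$ inside the $\ast$-function inequality to obtain the arclength bound in a single step, bypassing an explicit pointwise subordination of $f'$ to $k_\Omega'$.
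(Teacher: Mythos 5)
Your proposal is correct and follows essentially the same route as the paper: the subordination $\log f' \prec \log k_\Omega'$ is obtained exactly as you describe, the required preservation result for $P \mapsto \int_0^z P(t)t^{-1}\,dt$ being Suffridge's lemma (which the paper cites), whose hypothesis is precisely that the \emph{integrated} dominant $\log k_\Omega'$ is convex univalent --- so the obstacle you flag at the end is not actually an obstacle and no detour through Miller--Mocanu or Baernstein is needed. The only cosmetic difference is that the paper applies its Littlewood-type lemma to the subharmonic function $u(w)=re^{\Real w}$ composed with $\log f'$ rather than exponentiating first and invoking Littlewood with $p=1$; the equality analysis is identical, since that $u$ (equivalently $|w|$ on $k_\Omega'(\mathbb{D})$, which omits the origin) is harmonic in no subdomain, forcing the Schwarz function to be a rotation.
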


Let $f$ and $F$ be analytic functions in ${\mathbb D}$.
Then $f$ is said to be subordinate to $F$ ($f \prec F$,
or $f(z)\prec F(z)$ in $\mathbb{D}$, in short)
if there exists an analytic function $\omega$ in ${\mathbb D}$
with $|\omega (z)| \leq |z|$ and $f(z)=F(\omega (z))$ in ${\mathbb D}$.
In particular $f({\mathbb D}) \subset F({\mathbb D})$ holds, if $f \prec F$.
Notice that when $F$ is univalent in ${\mathbb D}$,
$f \prec F$ if and only if $f({\mathbb D}) \subset F({\mathbb D})$
and $f(0)=F(0)$.

Furthermore, by making use of subordination
and circular symmetrization,
we can considerably strengthen Theorem \ref{thm-01}.
We note  that $f \in {\mathcal C} ( \Omega ) $
forces that $f'(z) \not= 0$ in ${\mathbb D}$ and
the single valued branch $\log f'(z) $ with
$\log f'(0) = 0$ exists on ${\mathbb D}$.

\begin{theorem}
If $\Omega$ is starlike with respect to $1$,
then $\log k_\Omega'(z)$ is convex univalent in ${\mathbb D}$
and $\log f' (z) \prec \log k_\Omega'(z) $ holds for
$f \in {\mathcal C}( \Omega )$.
Furthermore for any subharmonic function
$u$ in the domain $ \log k_\Omega ' ( {\mathbb D} )$ and $r \in (0,1)$
\[
  \int_{-\pi}^\pi u( \log f' (re^{i \theta })) \, d \theta
  \leq
 \int_{-\pi}^\pi u( \log k_\Omega' (re^{i \theta })) \, d \theta
\]
holds with equality for some $u$ and $r \in (0,1)$
if and only if
$u$ is harmonic in $\log k_\Omega' ({\mathbb D}(0,r))$ or
$f (z) = \overline{\varepsilon} k_\Omega( \varepsilon z )$
for some $\varepsilon \in \partial {\mathbb D}$.
\label{MainTheorem}
\end{theorem}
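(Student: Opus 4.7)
The statement packages three assertions: (a) $G(z):=\log k_\Omega'(z)$ is convex univalent on $\mathbb{D}$; (b) $\log f'\prec G$ for every $f\in\mathcal{C}(\Omega)$; (c) the subharmonic integral-mean inequality together with its equality case. For (a), starlikeness of $\Omega$ with respect to $1$ is the same as starlikeness of $\Omega-1$ with respect to $0$, and since $\psi:=\phi_\Omega-1$ maps $\mathbb{D}$ conformally onto $\Omega-1$ with $\psi(0)=0$ and $\psi'(0)>0$, the function $\psi$ is starlike univalent. Differentiating \eqref{eq-ext3} gives $zG'(z)=\phi_\Omega(z)-1=\psi(z)$, exhibiting $G$ as the Alexander transform of the starlike $\psi$; Alexander's classical theorem then yields convex univalence of $G$.

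For (b), set $p:=\log f'$ and $q:=G$; both are analytic on $\mathbb{D}$ with $p(0)=q(0)=0$, and the defining condition of $\mathcal{C}(\Omega)$ reads $1+zp'(z)\in\Omega$, i.e.\ $1+zp'(z)\prec\phi_\Omega(z)=1+zq'(z)$. I would invoke the Miller--Mocanu differential subordination theorem with functional $\Psi(u,v):=1+v$ and dominant $q$; its admissibility condition demands that, at every boundary point $\zeta\in\partial\mathbb{D}$ at which the angular limit of $\phi_\Omega$ exists, $1+m\zeta q'(\zeta)=1+m(\phi_\Omega(\zeta)-1)\notin\Omega$ for every $m\geq 1$. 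This is immediate from starlikeness: $\phi_\Omega(\zeta)\in\partial\Omega$, and were $1+m(\phi_\Omega(\zeta)-1)\in\Omega$ for some $m>1$, applying starlikeness to this point with parameter $1/m\in(0,1)$ would force $\phi_\Omega(\zeta)$ into the open set $\Omega$, a contradiction. Thus $p\prec q$, i.e.\ $\log f'\prec\log k_\Omega'$.

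For (c), step (b) supplies a Schwarz function $w$ (with $w(0)=0$ and $|w(z)|\le|z|$) such that $\log f'(z)=G(w(z))$. For $u$ subharmonic on $G(\mathbb{D})$, the pullback $U:=u\circ G$ is subharmonic on $\mathbb{D}$ since $G$ is conformal, and $u(\log f'(re^{i\theta}))=U(w(re^{i\theta}))$. The desired inequality is now precisely Littlewood's subordination principle for subharmonic functions: letting $h_r$ be the Poisson integral on $\mathbb{D}(0,r)$ of the boundary values $U|_{\partial\mathbb{D}(0,r)}$, one has $U\le h_r$ on $\mathbb{D}(0,r)$, and $h_r\circ w$ is harmonic on $\mathbb{D}(0,r)$, so the mean value property at $0$ gives
\[
\int_{-\pi}^{\pi} U(w(re^{i\theta}))\,d\theta \leq \int_{-\pi}^{\pi} h_r(w(re^{i\theta}))\,d\theta = 2\pi h_r(0) = \int_{-\pi}^{\pi} U(re^{i\theta})\,d\theta .
\]
Equality in this chain forces the upper-semicontinuous nonpositive function $\theta\mapsto (U-h_r)(w(re^{i\theta}))$ to vanish identically. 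Either $w$ is a rotation $z\mapsto\varepsilon z$, in which case integrating $\log f'(z)=\log k_\Omega'(\varepsilon z)$ yields $f(z)=\overline{\varepsilon}\,k_\Omega(\varepsilon z)$; or by Schwarz's lemma there exists $\theta_0$ with $|w(re^{i\theta_0})|<r$, so $w(re^{i\theta_0})$ is an interior point of $\mathbb{D}(0,r)$, and the strong maximum principle applied to the subharmonic function $U-h_r\le 0$ forces $U\equiv h_r$ on $\mathbb{D}(0,r)$. In the latter case $u=U\circ G^{-1}$ is harmonic on $G(\mathbb{D}(0,r))=\log k_\Omega'(\mathbb{D}(0,r))$.

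The main obstacle I expect is step (b): the subordination $\log f'\prec\log k_\Omega'$ requires carefully converting the starlikeness hypothesis into Miller--Mocanu admissibility at the boundary. Once this is in hand, (c) is a clean application of Littlewood's subordination principle combined with the strong maximum principle for subharmonic functions.
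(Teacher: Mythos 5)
Your proof is correct and follows essentially the same route as the paper: convex univalence of $\log k_\Omega'$ via Alexander's theorem, the subordination $\log f' \prec \log k_\Omega'$ via an integrated-subordination principle, and the integral-mean inequality with its equality case via Littlewood's subordination argument for subharmonic functions (the paper's Lemma \ref{lemma:subordination_and_mean}). The only differences are presentational: where the paper simply cites Suffridge's lemma \cite{Suffridge} you reprove it through Miller--Mocanu admissibility, and your equality analysis exploits the univalence of $\log k_\Omega'$ to write $u = U\circ G^{-1}$ directly, bypassing the critical-value discussion needed in the paper's more general lemma.
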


By letting $u(w)$
as particular functions we can obtain various inequalities.
We shall only give typical examples. Since the functions
$\log |w| $, $|w|^p$ with $0 < p < \infty$,
$\Phi ( \pm \text{\rm Re} \, w )$ or
$\Phi ( \pm \text{\rm Im} \, w )$ with a continuous convex function
$\Phi$ on ${\mathbb R}$ are subharmonic functions
of $w \in {\mathbb C}$,
we have the following inequalities.

\begin{corollary}
If $\Omega$ is starlike with respect to $1$,
then for any $f \in {\mathcal C} ( \Omega )$ and $r \in (0,1)$
the following inequalities hold.
\begin{align}
& \int_{-\pi}^\pi
\log | \log f'(re^{i \theta }) |  \, d \theta
\leq   \int_{-\pi}^\pi
\log | \log k_\Omega'(re^{i \theta }) |  \, d \theta ,
\label{ineq:loglog}
\\
 &  \int_{-\pi}^\pi | \log f'(re^{i \theta }) |^p  \, d \theta
 \leq
 \int_{-\pi}^\pi | \log k_\Omega'(re^{i \theta }) |^p  \, d \theta 
\quad (0 < p < \infty),
\label{ineq:p-th-log}
\\
 & \int_{-\pi}^\pi \Phi ( \pm \log |f'(re^{i \theta })| ) \, d \theta
\leq
 \int_{-\pi}^\pi
 \Phi ( \pm \log |k_\Omega'(re^{i \theta })| ) \, d \theta ,
\label{ineq:Philog}
\\
& \int_{-\pi}^\pi \Phi ( \pm \arg f'(re^{i \theta }) )  \, d \theta
 \leq  \int_{-\pi}^\pi  \Phi ( \pm \arg k_\Omega'(re^{i \theta }) )
 \, d \theta .
\label{ineq:Phiarg}
\end{align}
Equality holds in $(\ref{ineq:loglog})$
or $(\ref{ineq:p-th-log})$ if and only if
$f(z) =  \overline{\varepsilon} Q_\Omega( \varepsilon z)$
for some $\varepsilon \in \partial {\mathbb D}$.
Furthermore when $\Phi (\pm t )$ is not  linear
in the interval
\begin{equation}
   \left(
     \min_{- \pi \leq \theta \leq \pi }
     \log |k_\Omega' (re^{i \theta })| ,
     \max_{- \pi \leq \theta \leq \pi }
     \log |k_\Omega' (re^{i \theta })|
   \right)
\label{log-interval}
\end{equation}
or
\begin{equation}
  \left(
    \min_{- \pi \leq \theta \leq \pi }
    \arg k_\Omega' (re^{i \theta }) ,
    \max_{- \pi \leq \theta \leq \pi }
    \arg k_\Omega' (re^{i \theta })
  \right) ,
\label{arg-interval}
\end{equation}
equality holds respectively
in $(\ref{ineq:Philog})$ or $(\ref{ineq:Phiarg})$
if and only if
$f(z) =  \overline{\varepsilon} k_\Omega( \varepsilon z)$
for some $\varepsilon \in \partial \mathbb{D}$.
\label{cor:several-ineqalities}
\end{corollary}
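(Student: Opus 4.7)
The plan is to apply Theorem~\ref{MainTheorem} four times, once for each inequality, by choosing a suitable subharmonic function $u$ on $\mathbb{C}$ (hence on the subdomain $\log k_\Omega'(\mathbb{D})$), and then translating the conclusion via the identities $\Real \log f'(z) = \log |f'(z)|$ and $\Imaginary \log f'(z) = \arg f'(z)$, which are valid for the single-valued branch fixed by $\log f'(0) = 0$ that was introduced just before Theorem~\ref{MainTheorem}. Concretely, I would take $u(w) = \log |w|$ for \eqref{ineq:loglog}, $u(w) = |w|^p$ for \eqref{ineq:p-th-log}, $u(w) = \Phi(\pm \Real w)$ for \eqref{ineq:Philog}, and $u(w) = \Phi(\pm \Imaginary w)$ for \eqref{ineq:Phiarg}.

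The first routine step is to record the subharmonicity of each test function. The facts I would invoke are standard: $\log |w|$ is subharmonic on $\mathbb{C}$ (its distributional Laplacian is $2\pi \delta_0$); for $0<p<\infty$, a direct computation gives $\Delta |w|^p = p^2 |w|^{p-2} \geq 0$ for $w\neq 0$, and continuity extends subharmonicity through the origin; finally, since $\pm \Real w$ and $\pm \Imaginary w$ are harmonic, their composition with a continuous convex $\Phi$ on $\mathbb{R}$ is subharmonic (in the $C^2$ case one sees this from $\Delta \Phi(\pm \Real w) = \Phi''(\pm \Real w) \geq 0$, and general convex $\Phi$ follows by approximation or by the sub-mean-value inequality). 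Plugging each $u$ into Theorem~\ref{MainTheorem} and rewriting in terms of $\log |f'|$ or $\arg f'$ immediately yields \eqref{ineq:loglog}--\eqref{ineq:Phiarg}.

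The most delicate part is the equality analysis, which I would draw from the dichotomy in Theorem~\ref{MainTheorem}: equality for a given admissible $u$ and $r$ forces either $u$ to be harmonic on $\log k_\Omega'(\mathbb{D}(0,r))$ or $f(z) = \overline{\varepsilon} k_\Omega(\varepsilon z)$. For \eqref{ineq:loglog} and \eqref{ineq:p-th-log} I would observe that $0 = \log k_\Omega'(0)$ lies in $\log k_\Omega'(\mathbb{D}(0,r))$ for every $r\in (0,1)$, while $\log|w|$ has a logarithmic singularity at $w=0$ and $|w|^p$ satisfies $\Delta |w|^p > 0$ on $\mathbb{C}\setminus\{0\}$; in either case $u$ is not harmonic on any open neighborhood of $0$, so the first alternative is excluded and $f$ must be a rotation of $k_\Omega$. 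For \eqref{ineq:Philog} and \eqref{ineq:Phiarg}, harmonicity of $\Phi(\pm \Real w)$ or $\Phi(\pm \Imaginary w)$ on $\log k_\Omega'(\mathbb{D}(0,r))$ is equivalent to $\Phi(\pm t)$ being affine on the range of $\pm \Real$ or $\pm \Imaginary$ over that set. The one nontrivial point is to identify this range with the interval \eqref{log-interval} or \eqref{arg-interval}: since $\Real \log k_\Omega'$ is harmonic on the closed disk $\overline{\mathbb{D}(0,r)}$, its range on the disk coincides, by the maximum/minimum principle and connectedness, with its range on the boundary circle, which is exactly the closed interval in \eqref{log-interval}; the argument for $\Imaginary \log k_\Omega'$ and \eqref{arg-interval} is identical. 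Thus if $\Phi(\pm t)$ is not linear on the stated interval, the first alternative is again excluded and equality forces $f(z) = \overline{\varepsilon} k_\Omega(\varepsilon z)$, completing the proof.
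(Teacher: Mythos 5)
Your proposal is correct and follows essentially the same route as the paper: apply Theorem \ref{MainTheorem} with $u(w)=\log|w|$, $|w|^p$, $\Phi(\pm\Real w)$, $\Phi(\pm\Imaginary w)$, and for equality use the dichotomy to rule out harmonicity of $u$ on $\log k_\Omega'(\mathbb{D}(0,r))$ in each case. Your identification of the range of $\Real\log k_\Omega'$ (resp. $\Imaginary\log k_\Omega'$) over the disk with the interval \eqref{log-interval} (resp. \eqref{arg-interval}) via the maximum principle is a detail the paper leaves implicit, but the argument is the same.
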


In contrast to the above corollary
we need to assume that $\Phi$ is nondecreasing
in the following theorem.

\begin{theorem}
If $\Omega$ is starlike with respect to $1$ and
symmetric with respect to ${\mathbb R}$,
then $\log | k_\Omega' (re^{i \theta })|$ is
a symmetric function of $\theta $ and nonincreasing on $[0, \pi ]$,
and for any convex and nondecreasing function
$\Phi$ in ${\mathbb R}$
and any Lebesgue measurable set $E \subset [-\pi,\pi]$
of Lebesgue measure $2 \theta$, we have
\[  \int_{E} \Phi ( \log |f'(re^{i s })| )  \, d s
\leq  \int_{- \theta }^\theta
\Phi ( \log |k_\Omega'(re^{i s })| ) \, d s .
\]
In particular
\[\int_{E} r |f'(re^{i s })|   \, d \theta
\leq   \int_{- \theta }^\theta r |k_\Omega'(re^{i s })|  \, d s ,
\]
i.e., the length of $\{ f(re^{i s}) :\, s \in E \}$
does not exceed that of
$\{ k_\Omega(re^{i s }) :\, |s | \leq \theta \}$.
\label{thm:localization}
\end{theorem}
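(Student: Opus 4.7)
The plan is to treat the two assertions of the theorem in sequence: first the structural claim that $\theta\mapsto\log|k_\Omega'(re^{i\theta})|$ is even in $\theta$ and nonincreasing on $[0,\pi]$, and then the rearrangement inequality via Baernstein's star function.

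For the structural claim, the symmetry of $\Omega$ with respect to $\mathbb{R}$ forces $\phi_\Omega$ to have real Taylor coefficients: the map $z\mapsto\overline{\phi_\Omega(\bar z)}$ is another conformal map of $\mathbb{D}$ onto $\Omega$ with $\phi_\Omega(0)=1$ and positive derivative at $0$, so by uniqueness it coincides with $\phi_\Omega$. Then $k_\Omega'$ also has real coefficients, which yields $|k_\Omega'(re^{-i\theta})|=|k_\Omega'(re^{i\theta})|$. For the monotonicity I use $1+zk_\Omega''(z)/k_\Omega'(z)=\phi_\Omega(z)$ and compute
\[
\frac{\partial}{\partial\theta}\log|k_\Omega'(re^{i\theta})| = \Real\Bigl(ire^{i\theta}\,\frac{k_\Omega''(re^{i\theta})}{k_\Omega'(re^{i\theta})}\Bigr) = -\Imaginary\phi_\Omega(re^{i\theta}),
\]
so the claim reduces to $\Imaginary\phi_\Omega(z)\ge 0$ on the upper semi-disc. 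This is immediate from univalence: $\phi_\Omega$ maps $(-1,1)$ into $\mathbb{R}$ with $\phi_\Omega'(0)>0$, so the two open semi-discs map onto the two components of $\Omega\setminus\phi_\Omega((-1,1))$; the local expansion $\phi_\Omega(iy)=1+iy\phi_\Omega'(0)+O(y^2)$ shows the upper one lies in $\{\Imaginary w>0\}$.

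For the integral inequality, set $U(z)=\Phi(\log|f'(z)|)$ and $V(z)=\Phi(\log|k_\Omega'(z)|)$; both are subharmonic on $\mathbb{D}$ as continuous convex nondecreasing functions composed with harmonic functions. Theorem \ref{MainTheorem} provides a Schwarz function $\omega$ with $\log f' = \log k_\Omega'\circ\omega$, so $U=V\circ\omega$. Introduce Baernstein's star function
\[
g^\star(re^{i\theta}) = \sup_{|E|=2\theta}\int_E g(re^{is})\,ds, \qquad 0\le\theta\le\pi.
\]
By the first part $V(re^{is})$ is even and nonincreasing in $|s|$, so the supremum defining $V^\star(re^{i\theta})$ is attained at $E=(-\theta,\theta)$, giving $V^\star(re^{i\theta})=\int_{-\theta}^\theta V(re^{is})\,ds$. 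The inequality in the theorem is therefore equivalent to the pointwise comparison $U^\star\le V^\star$ on the closed upper half disc, and the final unweighted assertion falls out as the special case $\Phi(t)=e^t$ (scaled by $r$).

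The main obstacle is this star-function comparison. I would follow Baernstein's original strategy: prove that $g^\star$ is subharmonic on $\mathbb{D}^+:=\{z\in\mathbb{D}:\Imaginary z>0\}$ whenever $g$ is subharmonic on $\mathbb{D}$, and that for a circularly symmetric subharmonic $V$ the star function $V^\star$ is actually harmonic on $\mathbb{D}^+$. Then $V^\star-U^\star$ is superharmonic on $\mathbb{D}^+$, continuous up to the real diameter where both functions vanish (the set $E$ has measure zero there), and the representation $U=V\circ\omega$ with $|\omega(z)|\le|z|$ combined with the circular symmetry of $V$ yields the required boundary comparison on $\partial\mathbb{D}\cap\overline{\mathbb{D}^+}$ via a circle-by-circle rearrangement estimate (each level set of $U$ on $|z|=r$ is, in measure, no bigger than the corresponding symmetric level set of $V$). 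The minimum principle then propagates $V^\star\ge U^\star$ throughout $\mathbb{D}^+$, completing the proof.
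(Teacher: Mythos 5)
Your treatment of the first assertion (evenness of $\theta \mapsto \log|k_\Omega'(re^{i\theta})|$ via the real coefficients of $\phi_\Omega$, and monotonicity via $\frac{\partial}{\partial\theta}\log|k_\Omega'(re^{i\theta})| = -\Imaginary \phi_\Omega(re^{i\theta}) \le 0$ on the upper semicircle) is exactly the paper's argument and is fine. The reduction of the integral inequality to the star-function comparison $U^\star \le V^\star$, using that $V^\star(re^{i\theta}) = \int_{-\theta}^\theta V(re^{is})\,ds$ because $V$ is already symmetric nonincreasing on each circle, is also the paper's route, as is the specialization $\Phi(t) = re^t$ for the arclength statement.

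The gap is in how you propose to prove $U^\star \le V^\star$. Two of the supporting claims in your Baernstein-style argument are wrong. First, it is not true that $V^\star$ is harmonic on $\mathbb{D}^+$ for a circularly symmetric \emph{subharmonic} $V$: take $V(z) = |z|^2$, so that $V^\star(re^{i\theta}) = 2\theta r^2$ and $\Delta V^\star = 8\theta > 0$. Harmonicity of the star function holds for circularly symmetric \emph{harmonic} $V$, but your $V = \Phi(\log|k_\Omega'|)$ is genuinely subharmonic once $\Phi$ is strictly convex, so $V^\star - U^\star$ is a difference of two subharmonic functions and the minimum principle does not apply. Second, the ``circle-by-circle rearrangement estimate'' you invoke for the boundary comparison --- that subordination forces $|\{s : U(re^{is}) > t\}| \le |\{s : V(re^{is}) > t\}|$ on each circle --- is not a consequence of $U = V\circ\omega$ with $|\omega(z)| \le |z|$ (consider $\omega(z) = z^2$, which redistributes values from the circle of radius $r^2$ onto the circle of radius $r$); subordination controls integrated quantities, not distribution functions. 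The paper avoids all of this: since $\log f' \prec \log k_\Omega'$ is an honest subordination, one applies the Littlewood-type Lemma \ref{lemma:subordination_and_mean} to the subharmonic functions $w \mapsto (\Phi(\Real w) - t)^+$ for every $t \in \mathbb{R}$, obtaining $\int_{-\pi}^{\pi}(U(re^{is})-t)^+\,ds \le \int_{-\pi}^{\pi}(V(re^{is})-t)^+\,ds$, and then Lemma \ref{lemma:Baernstein} (the equivalence of (b) and (c)) converts this directly into $U^\star \le V^\star$ (this is Lemma \ref{lemma:Leung}). No subharmonicity of star functions, no maximum principle in $\mathbb{D}^+$, and no boundary analysis are needed. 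If you want to salvage your approach, you must at least move $\Phi$ outside: prove $(\log|f'|)^\star \le (\log|k_\Omega'|)^\star$ for the \emph{harmonic} functions $\log|f'|$, $\log|k_\Omega'|$ and only then apply the implication (c)$\Rightarrow$(a) of Lemma \ref{lemma:Baernstein} to reinstate $\Phi$; but even then the subordination already in hand makes the heavy machinery superfluous.
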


\section{Subordination}
First we state a variant of the Littlewood subordination
theorem  (see \cite[Theorem 1.7]{DurenBook})
and give a proof for completeness.

\begin{lemma}
\label{lemma:subordination_and_mean}
Let $f$, $F \in \mathcal{A} $ with $f \prec F$.
Then for any subharmonic function
$u$ in $F({\mathbb D})$ and $r \in (0,1)$
\begin{equation}
\int_{-\pi}^\pi u(f(re^{i \theta }) \, d \theta
\leq  \int_{-\pi}^\pi u (F (re^{i \theta })) \, d \theta
\label{ineq:Littlewood}
\end{equation}
with equality if and only if $f(z) = F( \varepsilon z )$
for some $\varepsilon \in \partial {\mathbb D}$
or $u$ is harmonic in $F( {\mathbb D} (0,r ))$.
\end{lemma}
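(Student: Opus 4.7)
The plan is to reduce \eqref{ineq:Littlewood} to the classical Littlewood subordination setting and then invoke harmonic majorants. Since $f \prec F$, write $f = F \circ \omega$ where $\omega \in \mathcal{A}$ satisfies $|\omega(z)| \le |z|$; in particular $\omega(0) = 0$. Put $v = u \circ F$: because $F$ is holomorphic and $u$ is subharmonic on $F(\mathbb{D})$, the composition $v$ is subharmonic on $\mathbb{D}$. The inequality \eqref{ineq:Littlewood} then becomes
\[
  \int_{-\pi}^\pi v(\omega(re^{i\theta}))\,d\theta
  \le \int_{-\pi}^\pi v(re^{i\theta})\,d\theta .
\]

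The key step is to introduce $h_r$, the harmonic majorant of $v$ on $\mathbb{D}(0,r)$, namely the Poisson integral of the (upper semicontinuous) boundary values $v|_{\partial \mathbb{D}(0,r)}$. Then $h_r$ is harmonic on $\mathbb{D}(0,r)$ and $v \le h_r$ there by the sub-mean value property. Schwarz's lemma, applied to $\omega$, guarantees $\omega(\mathbb{D}(0,r)) \subset \mathbb{D}(0,r)$, so the composition $h_r \circ \omega$ is harmonic on $\mathbb{D}(0,r)$. Applying the mean value property of $h_r \circ \omega$ and of $h_r$ at $z=0$ gives
\[
  \int_{-\pi}^\pi h_r(\omega(re^{i\theta}))\,d\theta
  = 2\pi h_r(\omega(0)) = 2\pi h_r(0)
  = \int_{-\pi}^\pi v(re^{i\theta})\,d\theta .
\]
Combined with $v \le h_r$, this yields the desired inequality.

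For the equality case, set $g = h_r - v$, a non-negative superharmonic function on $\mathbb{D}(0,r)$. Equality in \eqref{ineq:Littlewood} forces $\int_{-\pi}^\pi g(\omega(re^{i\theta}))\,d\theta = 0$, so $g(\omega(re^{i\theta})) = 0$ for almost every $\theta$. If $\omega$ is not a rotation, the sharp Schwarz lemma gives $|\omega(re^{i\theta})| < r$ strictly, producing an interior zero of $g$; the minimum principle for superharmonic functions then forces $g \equiv 0$ on $\mathbb{D}(0,r)$, so $v$ is harmonic there, which in turn means $u$ is harmonic on $F(\mathbb{D}(0,r))$. Conversely, if $\omega(z) = \varepsilon z$ is a rotation then the two integrals are manifestly equal, and if $u$ is harmonic on $F(\mathbb{D}(0,r))$, the mean value property applied to the harmonic function $v$ yields equality directly.

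The main technical subtlety I anticipate is the passage from ``$v = u \circ F$ harmonic on $\mathbb{D}(0,r)$'' to ``$u$ harmonic on $F(\mathbb{D}(0,r))$'' when $F$ is not assumed univalent. I would handle it by noting that $\{F'=0\}$ is a discrete subset of $\mathbb{D}$, that $F$ is a local biholomorphism elsewhere (so local inverses transport the harmonicity of $v$ to $u$ on $F(\mathbb{D}(0,r))$ away from the discrete image of the critical set), and then by a removable-singularity argument across those isolated points, using that the subharmonic function $u$ is locally bounded above.
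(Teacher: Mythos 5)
Your proposal is correct and follows essentially the same route as the paper: decompose $f=F\circ\omega$, compare $v=u\circ F$ with its harmonic majorant on $\mathbb{D}(0,r)$, use the mean value property of the majorant composed with $\omega$, and settle equality via the Schwarz lemma dichotomy together with the maximum/minimum principle, transporting harmonicity of $u\circ F$ back to $u$ by local inverses and removability at the isolated critical values. The only (cosmetic) differences are that you phrase the comparison as a minimum principle for the superharmonic difference rather than a maximum principle for the subharmonic one, and you are slightly more careful about the boundary values being merely upper semicontinuous.
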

\proof
Let $\omega \in \mathcal{A}$
with $|\omega (z)| \leq |z|$ and $f(z)=F(\omega (z))$ in ${\mathbb D}$.
Let $U$ be the continuous function on $\overline{\mathbb D}(0,r)$
such that $U =u \circ F $ on $\partial {\mathbb D}(0,r)$
and harmonic in ${\mathbb D}(0,r)$.
Since $u \circ F $ is subharmonic in ${\mathbb D}$,
it follows from the maximum principle that
$(u \circ F) (z) \leq U(z)$ on $\overline{\mathbb D}(0,r)$.
Thus
\[
   (u \circ f) (z)
   = (u \circ F \circ \omega) (z)
   \leq U \circ \omega (z)
\]
and
\begin{align*}
  \int_{-\pi}^\pi u (f (re^{i \theta } )) \, d \theta
  &=
 \int_{-\pi}^\pi (u \circ F \circ \omega)( re^{i \theta } )
\, d \theta
\\
   &\leq
 \int_{-\pi}^\pi (U \circ \omega ) ( re^{i \theta } )  \, d \theta
\\
  &=
 2 \pi (U \circ \omega)(0)
 = 2 \pi U(0)
 = \int_{-\pi}^\pi U(  re^{i \theta } )  \, d \theta
 = \int_{-\pi}^\pi u (F (re^{i \theta } ) ) \, d \theta .
\end{align*}

If $f(z) = F( \varepsilon z )$
for some $\varepsilon \in \partial {\mathbb D}$,
then equality trivially holds in
(\ref{ineq:Littlewood}).
Also if $u$ is harmonic in $F( {\mathbb D} (0,r ))$,
then $u \circ F$ and $u \circ f$ are
harmonic in ${\mathbb D} (0,r )$,
and hence
it follows from $f(0) = F(0) = 0$ and
the mean value property of harmonic functions
that both hand sides of (\ref{ineq:Littlewood})
reduces to $2 \pi u(0)$.

Suppose that equality holds in (\ref{ineq:Littlewood}).
Then for almost every $\theta$,
$u \circ F (\omega(re^{i \theta }))= U (\omega(re^{i \theta }))$
holds.
Without loss of generality we may assume that $F$ is not constant.
By the classical Schwarz lemma it suffices  to show that
$u$ is harmonic in $F({\mathbb D}(0,r))$
when $| \omega (z) | < |z|$ for all $z \in {\mathbb D}$,
since otherwise $\omega (z) = \varepsilon z$ in ${\mathbb D}$
for some $\varepsilon \in \partial {\mathbb D}$.
Therefore for any fixed real $\theta$,
$\omega (re^{i \theta }) $ is an interior point of
${\mathbb D}(0,r)$.
It follows from the maximum principle
for subharmonic functions that $u \circ F = U$ in
${\mathbb D}(0,r)$.

Now we show that $u$ is harmonic in $F({\mathbb D}(0,r))$.
Let $w_0 \in F({\mathbb D}(0,r))$ and choose $z_0 \in {\mathbb D}(0,r)$
with $F(z_0) = w_0$. If $F'(z_0) \not= 0 $,
then $F$ maps a neighborhood $V_{z_0}$ of $z_0$ conformally onto
a neighborhood $F(V_{z_0})$ of $w_0$,
and hence $u = U \circ (F|_{V_{z_0}})^{-1}$
is harmonic  in $F(V_{z_0})$.
Even if $F'(z_0) = 0$,
$u$ is at least continuous at $z_0$.
In fact, for each $\eta > 0$ there exists
$\delta > 0$ such that
$|U(z)-U(z_0)| < \delta$ holds for  $z \in {\mathbb D}(z_0, \delta )$.
Since $F$ is nonconstant, $F({\mathbb D}(z_0, \delta ))$ is an open
neighborhood of $w_0$ and $|u(w)-u(w_0)| < \eta $ holds
for $w \in F({\mathbb D}(z_0, \delta ))$.
Thus $u$ is continuous at $w_0$.

We have shown that $u$ is continuous in $F({\mathbb D}(0,r))$ and
harmonic in $F ({\mathbb D}(0,r))$ except
at each point in the set of critical values
\[
   B
   =
   \{ w_0 = F(z_0) :\, z_0 \in {\mathbb D}(0,r) \;
   \text{with} \; F'(z_0) = 0 \} .
\]
Since $B$ is finite,
each point in $B$ is isolated and hence
is a removable singularity of $u$.
Thus  $u$ is harmonic in $F (V_{z_0})$.
\endproof

\begin{proof}[Proof of Theorem \ref{MainTheorem}]
Let $f \in {\mathcal C} ( \Omega ) $
and $h(z) = 1+zf''(z)/f'(z)$.
Then $h$ satisfies
$h(0)=1$,
$h \prec \phi_\Omega$.
By the starlikeness of $\Omega$ with respect to $1$
it follows from the Suffridge lemma
(see \cite{Suffridge}) that
\[
      \log f'(z)
   =
     \int_0^z \frac{h(\zeta)-1}{\zeta} \, d \zeta
   \prec
      \int_0^z \frac{\phi_\Omega (\zeta)-1}{\zeta} \, d \zeta
   = \log k_\Omega' ( z) ,
\]
where $k_\Omega$ is defined by \eqref{eq-ext3}.
Also it follows from the starlikeness of $\Omega$ with respect to $1$
that $\log k_\Omega'$ is convex univalent in ${\mathbb D}$.
Now the latter half of the statement is a direct consequence
of Lemma \ref{lemma:subordination_and_mean}.
\end{proof}

\begin{proof}[Proof of Theorem \ref{thm:arclength}]
Let $u(w) = r e^{\text{\rm Re}\, w }$
and $L$ be the Laplace operator.
Since $L( r e^{\text{\rm Re}\, w }) = r e^{\text{\rm Re}\, w } > 0$,
$u$ is subharmonic in $\mathbb{C}$ and
(\ref{ineq:arclenght})
easily follows from Theorem \ref{MainTheorem}.
Furthermore
the subharmonic function $r e^{\text{\rm Re}\, w }$ is not harmonic
in any domain in ${\mathbb C}$.
Thus if equality holds in (\ref{ineq:arclenght}),
then $\log f'(z) = \log k_\Omega' ( \varepsilon z)$ for
some $\varepsilon \in \partial {\mathbb D}$
and hence
we obtain
that $f(z) = \overline{\varepsilon} Q_\Omega ( \varepsilon z)$.
\end{proof}

\begin{proof}[Proof of Corollary \ref{cor:several-ineqalities}]
Inequalities  (\ref{ineq:loglog}), (\ref{ineq:p-th-log}),
(\ref{ineq:Philog}) and (\ref{ineq:Phiarg})
are  consequences of Theorem \ref{MainTheorem}
and the subharmonicity of functions
$\log |w| $, $|w|^p$, $\Phi ( \pm \text{\rm Re} \, w )$ and
$\Phi ( \pm \text{\rm Im} \, w )$,
respectively.

Notice $\log |w|$  is not harmonic
in $\log k_\Omega'({\mathbb D}(0,r))$
for any $r \in (0,1)$ because
$\log k_\Omega'(0) = 0$.
Also $|w|^p$ is not harmonic in any domain in
 ${\mathbb C}$.
Furthermore
$\Phi ( \pm \Real w )$
and $\Phi ( \pm \Imaginary w )$,
are  not harmonic in
$(\log k_\Omega ') (\mathbb{D})$, since
$\Phi (\pm t )$ is not linear
in the interval given by (\ref{log-interval}) or by (\ref{arg-interval}).
Thus if equality holds in (\ref{ineq:loglog}),
(\ref{ineq:p-th-log}), (\ref{ineq:Philog}) or (\ref{ineq:Phiarg}),
then by
Lemma \ref{lemma:subordination_and_mean}
we have
$ \log f'(z) = \log k_\Omega' (\varepsilon z )$
for some $\varepsilon \in \partial {\mathbb D}$
and hence $f(z) = \overline{\varepsilon} k_\Omega(\varepsilon z )$.
\end{proof}

\section{Circular Symmetrization}
We summarize without proofs some of the standard facts
on the theory of $*$-functions developed by
Baernstein \cite{Baernstein}.
For more on $*$-functions we refer to Duren \cite{DurenBook}.

Let $|E|$ denote the Lebesgue measure of a Lebesgue measurable
set $E \,( \subset {\mathbb R})$.
Let $h :\, [-\pi, \pi ] \rightarrow {\mathbb R} \cup \{ \pm \infty \}$
be a Lebesgue measurable function which is finite-valued almost
everywhere.
Then the distribution function $\lambda_h$ defined by
\[
   \lambda_h (t)
   =
   | \{ h > t \}|
   =
   | \{ \theta \in [-\pi, \pi ] :\, h( \theta )  > t \}|
\]
is nonincreasing and right continuous on ${\mathbb R}$,
and satisfies $\lim_{t \rightarrow - \infty} \lambda_h(t) = 2 \pi$
and $\lim_{t \rightarrow \infty } \lambda_h(t) = 0$. Let
\[
  \hat{h}( \theta )
  =
  \left\{
  \begin{array}{ll}
    \inf \{ t \in {\mathbb R} :\, \lambda_h(t) \leq 2 \theta \}
    & \mbox{ for }~ | \theta | < \pi
    \\[1ex]
    \essinf h
    &  \mbox{ for }~| \theta | = \pi .
  \end{array}
 \right.
\]
It is easy to see that  $\hat{h}$ is symmetric, i.e.,
$\hat{h} ( - \theta ) = \hat{h}( \theta )$,
and  satisfies the following conditions:
\begin{itemize}
\item[{\rm (i)}]
$\essinf h \leq \hat{h}( \theta ) \leq \esssup h$
\item[{\rm (ii)}]
$\hat{h}$ is right continuous and nonincreasing on $[0, \pi]$
\item[{\rm (iii)}]
$\lim_{\theta \downarrow 0} \hat{h} ( \theta )
= \hat{h}(0) = \esssup h$ and
$\lim_{\theta \uparrow 2 \pi} \hat{h} ( \theta )
= \hat{h}(\pi ) = \essinf h$
\item[{\rm (iv)}]
$\hat{h}$ is equimeasurable with $h$, i.e.,
$\lambda_{\hat{h}}(t) = \lambda_h (t)$ for all $t \in {\mathbb R}$.
\end{itemize}
The function $\hat{h}$ is called
the symmetric nonincreasing rearrangement of $h$.
Notice that $\hat{h}$ is unique in the sense that
if $\tilde{h}$ is also a symmetric function on
$[-\pi, \pi]$, nonincreasing and right continuous
on $[0, \pi ]$ with $\tilde{h}( \pi ) = \essinf h$
and equimeasurable with $h$, then $\tilde{h} = \hat{h}$.

For $h \in L^1[-\pi, \pi]$,
the $*$-function of $h$ is the function
defined by
\begin{equation}
h^* ( \theta ) = \sup_{|E| = 2 \theta } \int_E h( s ) \, ds ,
\quad 0 \leq \theta \leq \pi ,
\label{eq:def-of-*function}
\end{equation}
where supremum is taken over all Lebesgue measurable subsets
of $[-\pi, \pi]$ with $|E| = 2 \theta $. Then it is known that
\begin{equation}
 h^* ( \theta ) = \int_{-\theta}^\theta \hat{h}(s) \, ds ,
 \quad 0 \leq \theta \leq \pi .
\label{eq:*function-and-symmetrization}
\end{equation}

\begin{lemma}{\rm (\cite[p.~150]{Baernstein})}
For $h$, $H \in L^1[-\pi, \pi ]$,
the following statements are equivalent:
\begin{itemize}
 \item[{\rm (a)}]
For every convex nondecreasing function $\Phi$ on ${\mathbb R}$
\[  \int_{-\pi}^\pi \Phi(h(\theta )) \, d \theta
  \leq \int_{-\pi}^\pi \Phi(H(\theta )) \, d \theta ,
\]
 \item[{\rm (b)}] For every $t \in {\mathbb R}$
\[  \int_{-\pi}^\pi (h(\theta )-t)^+ \, d \theta
  \leq \int_{-\pi}^\pi (H(\theta )-t)^+ \, d \theta ,
\]
\item[{\rm (c)}] $h^* ( \theta ) \leq H^* ( \theta )$ \quad\mbox{ for } $0 \leq \theta \leq \pi$,
\end{itemize}
\label{lemma:Baernstein}
where  $(h(\theta)-t)^+ = \max \{ h(\theta)-t , 0 \}$.
\end{lemma}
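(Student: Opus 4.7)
The plan is to prove $(\mathrm{a}) \Rightarrow (\mathrm{b})$, $(\mathrm{b}) \Rightarrow (\mathrm{a})$, and the equivalence $(\mathrm{b}) \Leftrightarrow (\mathrm{c})$. The implication $(\mathrm{a}) \Rightarrow (\mathrm{b})$ is immediate: for each fixed $t \in \mathbb{R}$, the function $x \mapsto (x-t)^+$ is convex and nondecreasing on $\mathbb{R}$, so (b) is nothing but (a) applied to this family of test functions.

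For $(\mathrm{b}) \Rightarrow (\mathrm{a})$, I would appeal to the standard representation theorem for convex nondecreasing functions. If $\Phi$ is convex and nondecreasing on $\mathbb{R}$, then its right-derivative $\Phi_+'$ is nondecreasing and determines a positive Borel measure $d\mu$ via $\mu((a,b]) = \Phi_+'(b) - \Phi_+'(a)$; a brief Fubini computation then yields
\[
   \Phi(x) - \Phi(x_0)
   =
   \Phi_+'(x_0) \, (x - x_0)
   + \int_{\mathbb{R}} \bigl[ (x-t)^+ - (x_0 - t)^+ \bigr] \, d\mu(t)
\]
for every $x_0 \in \mathbb{R}$. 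Choosing $x_0 < \min \{ \essinf h , \essinf H \}$, integrating over $\theta \in [-\pi, \pi]$, and interchanging the order of integration (Tonelli), the desired inequality (a) reduces to two ingredients: (b) itself, integrated against $d\mu$, and the linear comparison $\int h \, d\theta \leq \int H \, d\theta$. The latter is in turn a corollary of (b): for $t$ below $\min \{ \essinf h , \essinf H \}$, $(h-t)^+ = h-t$ and $(H-t)^+ = H-t$ almost everywhere, so (b) collapses to $\int h \leq \int H$.

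For the equivalence $(\mathrm{b}) \Leftrightarrow (\mathrm{c})$, the crucial identity is
\[
   \sup_{0 \leq \theta \leq \pi} \bigl( h^*(\theta) - 2 \theta t \bigr)
   =
   \int_{-\pi}^\pi (h(s) - t)^+ \, ds .
\]
Writing $h^*(\theta) - 2 \theta t = \int_{-\theta}^\theta ( \hat{h}(s) - t ) \, ds$ and differentiating in $\theta$, the derivative $2 ( \hat{h}(\theta) - t)$ is nonincreasing on $[0, \pi]$, so the supremum is attained where $\hat h$ crosses the level $t$ and equals $\int_{\{ \hat h > t\}} ( \hat h - t ) \, ds = \int_{-\pi}^\pi (h - t)^+ \, ds$ by equimeasurability. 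Direction $(\mathrm{c}) \Rightarrow (\mathrm{b})$ is then immediate by taking the supremum in $\theta$ on each side. Conversely, $h^*$ is continuous and concave on $[0, \pi]$ with $h^*(0) = 0$ (since its derivative $2 \hat h$ is nonincreasing), so extending $h^*$ by $-\infty$ outside $[0, \pi]$ produces a proper upper semicontinuous concave function on $\mathbb{R}$, to which the Fenchel--Moreau biconjugacy theorem applies and gives
\[
   h^*(\theta)
   =
   \inf_{t \in \mathbb{R}}
   \Bigl( \int_{-\pi}^\pi (h - t)^+ \, ds + 2 \theta t \Bigr) ,
   \quad 0 \leq \theta \leq \pi .
\]
Assumption (b) then passes through the infimum and delivers $h^* \leq H^*$, which is (c).

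The main obstacle is the biconjugacy step in $(\mathrm{b}) \Rightarrow (\mathrm{c})$: one must verify concavity and upper semicontinuity of the extended $h^*$ with enough care that Fenchel--Moreau applies and that the recovered infimum really equals $h^*$ on all of $[0,\pi]$, including the endpoints. The $(\mathrm{b}) \Rightarrow (\mathrm{a})$ direction also requires modest care to justify Tonelli and the choice of $x_0$, but these steps are routine once the Stieltjes measure $d\mu$ associated to $\Phi_+'$ is in hand.
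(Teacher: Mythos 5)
The paper itself offers no proof of this lemma: it is quoted verbatim from Baernstein \cite[p.~150]{Baernstein} in a section that explicitly summarizes facts ``without proofs,'' so there is no in-paper argument to compare against. Your proof is, in substance, the classical one. The implication (a)$\Rightarrow$(b) by testing with $x\mapsto(x-t)^+$, the Stieltjes representation of a convex nondecreasing $\Phi$ for (b)$\Rightarrow$(a), and above all the duality
\[
\sup_{0\le\theta\le\pi}\bigl(h^*(\theta)-2\theta t\bigr)=\int_{-\pi}^{\pi}(h(s)-t)^+\,ds,
\]
whose verification via the monotonicity of $\hat h$ and equimeasurability is correct, together with the concavity and continuity of $h^*$ that make the biconjugation legitimate, is exactly the mechanism behind Baernstein's lemma. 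The endpoint checks go through: at $\theta=0$ the infimum in your dual formula is $0$, and at $\theta=\pi$ it is $\int h$, matching $h^*$.

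One genuine (though repairable) gap: for $h,H\in L^1[-\pi,\pi]$ the essential infima may be $-\infty$, in which case no $x_0<\min\{\essinf h,\essinf H\}$ exists and your representation
$\Phi(x)=\Phi(x_0)+\Phi_+'(x_0)(x-x_0)+\int(x-t)^+\,d\mu(t)$
is not exact on the set where $h<x_0$ (the tangent line strictly underestimates $\Phi$ there). The standard fix is to truncate: set $\Phi_N(x)=\Phi(\max(x,-N))$, which is convex, nondecreasing and bounded below, so that $\Phi_N(x)=\Phi(-N)+\int(x-t)^+\,d\mu_N(t)$ holds for all $x$; apply (b) and Tonelli to each $\Phi_N$ and let $N\to\infty$ by monotone convergence. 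Relatedly, $\int\Phi(h)\,d\theta$ need only be well defined in $(-\infty,+\infty]$ (the negative part is controlled by a supporting affine function), and the inequality in (a) should be read in that extended sense; you should also derive $\int h\le\int H$ from (b) by letting $t\to-\infty$ in $\int\max(h,t)\le\int\max(H,t)$ rather than by choosing $t$ below the essential infima. With these adjustments the argument is complete.
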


Let $v$ be a subharmonic function
in the unit disk ${\mathbb D}$.
Then for each fixed $r \in (0,1)$,
$v(re^{i \theta})$ is an integrable function of
$\theta \in [- \pi, \pi ]$.
Let $\hat{v}(re^{i \theta })$ and $v^* (re^{i \theta })$
be the symmetrically nonincreasing rearrangement
and the $*$-function
of the function $[-\pi, \pi] \ni \theta \mapsto v(re^{i \theta })$,
respectively.
The function $\hat{v}$ is called the circular symmetrization of $v$.

We now can conclude an inequality concerning $*$-functions from
Lemma \ref{lemma:subordination_and_mean}. The following lemma is not knew and it is an equivalent variant of
Lemma 2 in Leung \cite{Leung}.
\begin{lemma}
Let $f , F \in \mathcal{A}$ with
$f \prec F$. Then for any subharmonic function
$u$ in $ F ( {\mathbb D} )$ and $r \in (0,1)$
\[  (u \circ f)^*(re^{i \theta})
\leq (u \circ F)^*(re^{i \theta}), \quad 0 \leq \theta \leq \pi .
\]
\label{lemma:Leung}
\end{lemma}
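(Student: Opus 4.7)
The plan is to derive the inequality from the equivalence of conditions (b) and (c) in Lemma \ref{lemma:Baernstein}, applied to the pair of functions
\[
h(\theta) := u(f(re^{i\theta})), \qquad H(\theta) := u(F(re^{i\theta}))
\]
on $[-\pi,\pi]$. The stated conclusion $(u\circ f)^*(re^{i\theta}) \leq (u\circ F)^*(re^{i\theta})$ for $0 \leq \theta \leq \pi$ is exactly condition (c) for this pair, so it is enough to verify condition (b): for every $t \in {\mathbb R}$,
\[
\int_{-\pi}^{\pi} \bigl( u(f(re^{i\theta})) - t \bigr)^+ d\theta
\;\leq\;
\int_{-\pi}^{\pi} \bigl( u(F(re^{i\theta})) - t \bigr)^+ d\theta.
\]

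The key step is to recognise that the integrands themselves arise from a subharmonic function on $F({\mathbb D})$. Fix $t \in {\mathbb R}$ and set $v_t(w) := (u(w) - t)^+$ on $F({\mathbb D})$. Since the scalar function $\varphi(x) := (x-t)^+$ is convex and nondecreasing on ${\mathbb R}$ (with the natural value $0$ at $-\infty$), the standard fact that convex nondecreasing functions preserve subharmonicity under postcomposition shows that $v_t$ is subharmonic on $F({\mathbb D})$. Now apply Lemma \ref{lemma:subordination_and_mean} with $v_t$ in place of $u$: the hypothesis $f \prec F$ gives precisely the displayed inequality in (b). Condition (c) of Lemma \ref{lemma:Baernstein} then delivers the conclusion.

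The main point requiring a word of care is integrability of $h$ and $H$, which is needed so that the $*$-functions are defined via (\ref{eq:def-of-*function}). Both $u \circ f$ and $u \circ F$ are subharmonic in ${\mathbb D}$ as compositions of the subharmonic $u$ with the analytic maps $f$ and $F$, and unless one of them is identically $-\infty$ they are integrable on every circle $|z|=r$ with $0 < r < 1$. The degenerate case can be handled by truncating $u$ to $u \vee (-N)$ (still subharmonic), applying the finite-valued case, and then letting $N \to \infty$ using monotone convergence; alternatively, if $u \circ F \equiv -\infty$ on some circle then the inequality is trivial. Beyond this bookkeeping, the proof is simply the clean combination of Lemmas \ref{lemma:subordination_and_mean} and \ref{lemma:Baernstein}.
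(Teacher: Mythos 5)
Your proposal is correct and follows exactly the paper's own argument: apply Lemma \ref{lemma:subordination_and_mean} to the subharmonic function $(u(w)-t)^+$ and then invoke the equivalence of (b) and (c) in Lemma \ref{lemma:Baernstein}. The extra remarks on integrability are a harmless refinement the paper omits.
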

\begin{proof}
Since $(u(w)-t)^+$ is also subharmonic
in $F({\mathbb D})$ for any $t \in {\mathbb R}$,
we have by Lemma \ref{lemma:subordination_and_mean} that
\[  \int_{-\pi}^\pi (u\circ f (re^{i \theta })-t)^+ \, d \theta
 \leq \int_{-\pi}^\pi (u\circ F (re^{i \theta })-t)^+ \, d \theta .
\]
Therefore it follows from Lemma \ref{lemma:Baernstein} that
$(u \circ f)^*(re^{i \theta})  \leq (u \circ F)^*(re^{i \theta})$
for $0 \leq \theta \leq \pi$.
\end{proof}

\begin{proof}[Proof of Theorem \ref{thm:localization}]
Let $\Phi $ be a nondecreasing convex function in ${\mathbb R}$.
Then $\Phi ( \text{\rm Re} \, w )$ is a subharmonic function
of $w \in {\mathbb C}$. By Lemma \ref{lemma:Leung} we have
\begin{equation*}
  (\Phi ( \log |f '|))^*  (re^{i \theta }) \leq
( \Phi ( \log |k_\Omega'|))^*  (re^{i \theta })
\label{ineq:*functions}
\end{equation*}
for all $r \in (0,1)$ and  $\theta \in [0, \pi]$.
Now we temporarily suppose that
for fixed $r \in (0,1)$, $\log |k_\Omega'(re^{i \theta })|$
is a symmetric function of $\theta$ and nonincreasing on
$[0, \pi]$.
Then so is $\Phi ( \log | k_\Omega' (re^{i \theta }) | )$,
since $\Phi$ is nondecreasing.
Thus the symmetrically nonincreasing rearrangement of
$\Phi (\log | k_\Omega'| )$ coincides with itself,
i.e.,
$(\Phi (\log | k_\Omega'|))^{\widehat{}} (re^{i \theta })
= \Phi (\log | k_\Omega' (re^{i \theta })|)$.
Therefore we have by (\ref{eq:def-of-*function}) and
(\ref{eq:*function-and-symmetrization})
that for any Lebesgue measurable set $E \subset [-\pi ,\pi ]$
with $|E| = 2 \theta $
\begin{align*}
 \int_E
 \Phi ( \log |f'(re^{i s })|) \, ds
 &\leq
 \Phi ( \log |f '|)^*  (re^{i \theta })
\\
 &\leq
 \Phi ( \log |k_\Omega'|)^*  (re^{i \theta })
\\
 &=
 \int_{- \theta }^\theta
  \Phi (\log | k_\Omega'|)^{\widehat{}} (re^{i s }) \, ds
 =
 \int_{- \theta }^\theta
 \Phi (\log | k_\Omega' (re^{i s })|) \, ds  .
\end{align*}

It remains to show
that the function $ \theta \mapsto \log | k_\Omega'(re^{i \theta})|$ is
symmetric and strictly decreasing on $[0, \pi ]$.
Since $\Omega$ is symmetric with respect to ${\mathbb R}$,
we have $\overline{\phi_\Omega ( \overline{z})} = \phi_\Omega (z)$.
This implies
$\log | k_\Omega'(re^{ - i \theta})| = \log | k_\Omega'(re^{ i \theta})|$.
Furthermore from  $\phi_\Omega '(0) > 0$ it follows that
$\phi_\Omega$ maps the upper half disk
${\mathbb D} \cap \{ z\in\mathbb{C} :\, \Imaginary z > 0 \}$
conformally onto
$\Omega \cap \{ w\in\mathbb{C}:\, \Imaginary w > 0 \}$.
Thus for $\theta \in (0, \pi )$
\begin{align*}
  \frac{d}{d \theta }
   \left\{ \log | k_\Omega'(re^{i \theta }) | \right\}
&=
  \frac{d}{d \theta }
  \Real \left\{ \log  k_\Omega'(re^{i \theta }) \right\}
\\
&=  \Real \left\{  \frac{d}{d \theta } \log k_\Omega'(re^{i \theta })
 \right\}
\\
&=  \Real \left\{  \frac{i re^{i \theta } k_\Omega''(re^{i \theta })}
  {k_\Omega'(re^{i \theta }) }  \right\}
\\
&= \Real \left\{   i (\phi_\Omega ( re^{i \theta })-1) \right\}
 = - \Imaginary \phi_\Omega ( re^{i \theta }) < 0 .
\end{align*}
\end{proof}

\bibliographystyle{amsplain}

\end{document}